\newcommand\un{\mathds{1}}
\newcommand\eps{\varepsilon}
\renewcommand\phi{\varphi}
\newcommand\pro[1]{\mathbb{P}\left(#1\right)}
\newcommand\esp[1]{\mathbb{E}\left[#1\right]}
\newcommand\uno[1]{\un_{\left\{#1\right\}}}
\newcommand\1{\un}
\newcounter{comptetape}
\newtheorem{thm}{Theorem}[section]
\newtheorem{prop}[thm]{Proposition}
\newtheorem{lem}[thm]{Lemma}
\newtheorem{rem}[thm]{Remark}
\newtheorem{defi}[thm]{Definition}
\newtheorem{ex}[thm]{Example}
\newcommand\n{\mathbb{N}}
\renewcommand\r{\mathbb{R}}
\renewcommand\ll{\left}
\newcommand\rr{\right}
\begin{document}

\begin{frontmatter}

\title{A convergence criterion for systems of point processes from the convergence of their stochastic~intensities.}

\runtitle{Convergence of point processes}

\begin{aug}

\author{\fnms{Xavier} \snm{Erny}\ead[label=e4]{xavier.erny@univ-evry.fr}}

\address{
  Universit\'e Paris-Saclay, CNRS, Univ Evry, Laboratoire de Math\'ematiques et Mod\'elisation d'Evry, 91037, Evry, France
}

\runauthor{X. Erny}

 \end{aug}

\begin{abstract}
We study systems of simple point processes that admit stochastic intensities. We represent these point processes as thinnings of Poisson measures and are interested in a convergence result of such systems. This result states that, if the stochastic intensities of the limit point processes are independent of the underlying Poisson measures, the convergence in distribution in Skorohod topology of the stochastic intensities implies the same convergence for the point processes.
\end{abstract}

 \begin{keyword}[class=MSC]
  \kwd{60B12}
   \kwd{60G55}
   \kwd{60G57}
 \end{keyword}

\begin{keyword}
 \kwd{Poisson random measure}
 \kwd{Point process}
 \kwd{Stochastic intensity}
\end{keyword}

\end{frontmatter}

\section{Introduction}

In this paper we consider systems of point processes admitting stochastic intensities. Processes of these type arise naturally in the study of particle systems, such as modeling of neural activity (see e.g. \cite{okatan_analyzing_2005} or \cite{reynaud-bouret_goodness--fit_2014}) or financial data (see e.g. \cite{bauwens_modelling_2009} or \cite{lu_high_2018}).
Note that such processes can always be written as thinnings of Poisson measures.

The natural question is: when does the convergence in distribution in Skorohod space of systems of such processes follow from the convergence of their intensities? In this paper we study this question. 
Our main result, Theorem~\ref{convergencepointprocess}, roughly states  that if the intensities of point processes converge in distribution and their limits are independent of the underlying Poisson measures, then the systems of point processes converge as well.


To give the formal statement of Theorem~\ref{convergencepointprocess}, denote by $D(\r_+,\r)$ Skorohod space, $\mathcal{M}$ the space of locally finite measures endowed with the topology of vague convergence and $\mathcal{N}$ the subspace of locally finite simple point measures.

\begin{thm}
\label{convergencepointprocess}
Let $\bar{Y}^k$ and $Y^{N,k}$ $(N,k\in\n^*)$ be $D(\r_+,\r_+)$-valued random variables. Let $\ll(\pi^k\rr)_{k\in\n^*}$ and $\ll(\bar{\pi}^k\rr)_{k\in\n^*}$ be i.i.d. families of Poisson measures on $\r_+\times\r_+$ having Lebesgue intensity. Let $Z^{N,k}$ and $\bar{Z}^k$ be point processes defined as follows
\begin{equation*}
Z^{N,k}_t:=\int_{[0,t]\times\r_+}\uno{z\leq Y^{N,k}_{s-}}d\pi^k(s,z),~~\bar{Z}^k_t:=\int_{[0,t]\times\r_+}\uno{z\leq \bar{Y}^k_{s-}}d\bar{\pi}^k(s,z),~~~k\geq 1.
\end{equation*}
Assume that, for every $n\geq 1,(Y^{N,1},\pi^1,...,Y^{N,n},\pi^n)$ converges in distribution to $(\bar Y^{1},\bar\pi^1,...,\bar Y^{n},\bar\pi^n)$ in $(D(\r_+,\r)\times\mathcal{N})^n,$ and that, for each $k\geq 1,$ $\bar Y^k$ is independent of $\bar\pi^k.$

Then, for any $n\geq 1$, $\ll(Z^{N,k}\rr)_{1\leq k\leq n}$ converges to $\ll(\bar{Z}^k\rr)_{1\leq k\leq n}$ in distribution in $D(\r_+,\r^n).$ In particular, $\ll(Z^{N,k}\rr)_{k\geq 1}$ converges to $\ll(\bar{Z}^k\rr)_{k\geq 1}$ in distribution in $D(\r_+,\r)^{\n^*}$ endowed with the product topology.
\end{thm}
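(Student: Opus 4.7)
The approach is to combine Skorohod's representation theorem with an a.s.\ continuity argument for the thinning map. Fix $n\ge 1$. Since $(D(\r_+,\r)\times\mathcal{N})^n$ is Polish, I may realise all processes on a common probability space so that $(Y^{N,k},\pi^k)_{1\le k\le n}$ converges almost surely to $(\bar Y^k,\bar\pi^k)_{1\le k\le n}$. Setting
$$\Phi(y,\pi)(t):=\int_{[0,t]\times\r_+}\un_{\{u\le y_{s-}\}}\,d\pi(s,u),$$
so that $Z^{N,k}=\Phi(Y^{N,k},\pi^k)$ and $\bar Z^k=\Phi(\bar Y^k,\bar\pi^k)$, it suffices to prove that $\Phi$ is a.s.\ continuous at each limit point $(\bar Y^k,\bar\pi^k)$ and then upgrade coordinate-wise convergence to joint convergence in $D(\r_+,\r^n)$.

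The decisive use of the independence hypothesis is that, almost surely, no atom of $\bar\pi^k$ lies on the ``completed graph''
$$B^k:=\bigcup_{s\ge 0}\{s\}\times\bigl[\bar Y^k_{s-}\wedge\bar Y^k_s,\;\bar Y^k_{s-}\vee\bar Y^k_s\bigr].$$
Because $\bar Y^k$ is c\`adl\`ag, $B^k$ has zero Lebesgue measure; conditioning on $\bar Y^k$ and using its independence from $\bar\pi^k$ yields $\bar\pi^k(B^k)=0$ a.s. On this full-probability event, pick $T>0$ at which $\bar Y^k$ is continuous and $M>\sup_{[0,T]}\bar Y^k$: the measure $\bar\pi^k$ has finitely many atoms $(T_j,U_j)$ in $[0,T]\times[0,M]$, and vague convergence $\pi^k\to\bar\pi^k$ produces atoms $(T^N_j,U^N_j)$ of $\pi^k$ converging to them for $N$ large. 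Since each $U_j$ lies strictly outside $[\bar Y^k_{T_j-}\wedge\bar Y^k_{T_j},\,\bar Y^k_{T_j-}\vee\bar Y^k_{T_j}]$ and since Skorohod convergence $Y^{N,k}\to\bar Y^k$ forces any accumulation value of $Y^{N,k}_{T^N_j-}$ to lie in $\{\bar Y^k_{T_j-},\bar Y^k_{T_j}\}$, the two inequalities $U^N_j\le Y^{N,k}_{T^N_j-}$ and $U_j\le\bar Y^k_{T_j-}$ have, for large $N$, the same truth value. Hence $Z^{N,k}$ and $\bar Z^k$ match jump-by-jump on $[0,T]$ with converging jump times, which is Skorohod convergence on $[0,T]$; letting $T$ range over a sequence tending to $+\infty$ (avoiding the countable jump set of $\bar Y^k$) gives a.s.\ Skorohod convergence on $\r_+$.

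Coordinate-wise a.s.\ convergence of $Z^{N,k}$ toward $\bar Z^k$ then upgrades to joint Skorohod convergence in $D(\r_+,\r^n)$ because the i.i.d.\ Poisson measures $\bar\pi^1,\dots,\bar\pi^n$ have a.s.\ disjoint atom-time sets, so the limits $\bar Z^k$ have pairwise disjoint jump times -- a classical sufficient condition for coordinate Skorohod convergence of c\`adl\`ag step functions to pass to the product $D(\r_+,\r^n)$. Finally, convergence in $D(\r_+,\r)^{\n^*}$ with the product topology follows immediately, since on a countable product of Polish spaces weak convergence reduces to convergence of all finite-dimensional marginals.

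\textbf{Main obstacle.} The technical core is the continuity of $\Phi$ at good pairs $(\bar Y^k,\bar\pi^k)$. Skorohod convergence $Y^{N,k}\to\bar Y^k$ does not pin down $Y^{N,k}$ pointwise near the jumps of $\bar Y^k$, and it is precisely the independence-driven avoidance of the closed jump-intervals by atoms of $\bar\pi^k$ that excludes the ambiguous case in which an atom's height could be classified either as ``kept'' or ``rejected'' depending on which side of a jump the approximating time-coordinate $T^N_j$ happens to fall.
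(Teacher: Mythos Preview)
Your proposal is correct and follows the same overall architecture as the paper: Skorohod representation on the Polish space $(D(\r_+,\r)\times\mathcal N)^n$, almost-sure continuity of the thinning map $\Phi$ at the limit, use of the disjointness of the atom-time sets of the independent $\bar\pi^k$ to pass from coordinate-wise to joint convergence in $D(\r_+,\r^n)$, and then convergence in the product topology via finite-dimensional marginals.

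The one genuine difference is in how you organise the continuity lemma for $\Phi$. The paper splits the ``good set'' into two almost-sure conditions: (c) $\bar Y^k$ is continuous at every atom time of $\bar\pi^k$, and (d) no atom of $\bar\pi^k$ lies on the left-limit graph $\{(t,\bar Y^k_{t-}):t\ge0\}$. Under (c), Lemma~\ref{convergenceponctuel} gives $Y^{N,k}_{T^N_j-}\to\bar Y^k_{T_j}$ directly, which makes the indicator comparison trivial. You instead replace (c)$+$(d) by the single requirement that atoms avoid the \emph{completed} graph $B^k$, and then invoke the (correct) fact that accumulation values of $Y^{N,k}_{T^N_j-}$ lie in $\{\bar Y^k_{T_j-},\bar Y^k_{T_j}\}$. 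This buys you a continuity statement for $\Phi$ that is strictly more general (it does not assume continuity of the limit intensity at atom times), at the price of a slightly more delicate limit argument. Under the theorem's independence hypothesis both routes are available, since the countable jump set of $\bar Y^k$ is a.s.\ missed by $\bar\pi^k$ anyway; the paper simply takes the shorter path through (c).

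Two cosmetic points: after Skorohod representation the Poisson measures depend on $N$ (you write $\pi^k$ throughout), and when localising to $[0,T]\times[0,M]$ you should also choose $T,M$ so that $\bar\pi^k$ puts no mass on the boundary, as the paper does, to make the vague-convergence-to-atom-convergence step clean.
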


\begin{rem}
In the statement of Theorem~\ref{convergencepointprocess}, we need to guarantee the following property: Poisson random measures are $\mathcal{N}-$valued random variables. This is a direct consequence of Theorem~2.6.III.(ii) of \cite{daley_introduction_2003} and of the definition of Poisson measures (see Definition~\ref{poisson}).
\end{rem}

\begin{rem}
According to Lemma~4 of~\cite{bremaud_stability_1996}, a point process $Z$ having stochastic intensity $\ll(Y_{s-}\rr)_{s\geq 0}$ (where $Y$ is a c\`adl\`ag process) can always be written in the form of Theorem~\ref{convergencepointprocess}.
\end{rem}

Let us note that, in Theorem~\ref{convergencepointprocess}, the processes $Y^{N,k}$ are not assumed to be independent of the Poisson measures $\pi^k$. Otherwise, the proof would be straightforward by conditioning by $Y^{N,k}$.
Let us also note that the condition of independence of the limiting intensities $\bar Y^k$  from Poisson measures $\bar \pi^k$ is often satisfied and natural in many examples of application. It holds for example when the limiting intensities are deterministic, or when they are functionals of Brownian motions with respect to the same filtration as $\bar \pi^k$.

If the processes  $Y^{N,k}$ are semimartingales, 
 the result of Theorem~\ref{convergencepointprocess} can 
 follow from Theorem~IX.4.15 of \cite{jacod_limit_2003},
 provided the convergence of the characteristics of the corresponding semimartingales holds. But we do not assume the semimartingale structure for the intensities~$Y^{N,k}$ in Theorem~\ref{convergencepointprocess}. 
  
 When the limiting intensities are deterministic,
 Theorem~\ref{convergencepointprocess} can be compared to Theorem~1 of \cite{brown_martingale_1978} that states that the convergence of point processes is implied by the pointwise convergence in distribution of their compensators (i.e. for each $t\geq 0,$ the compensator at time~$t$ converges in distribution in~$\r$). In \cite{brown_martingale_1978}, Theorem~1 holds when the compensator of the limit point process is a deterministic function, whereas in Theorem~\ref{convergencepointprocess}, the limit point processes have stochastic intensities.
  
The idea of the proof of Theorem~\ref{convergencepointprocess} consists in writing the point processes $Z^{N,k}$ and $\bar Z^k$ as a function $\Phi$ of respectively $Y^{N,k},\pi^k$ and $\bar Y^k,\bar\pi^k$ ($k\geq 1$). Then, knowing that, $(Y^{N,k},\pi^k)$ converges in distribution to $(\bar Y^k,\bar \pi^k)$, we can use Skorohod representation theorem to assume that this convergence is almost sure. Finally, proving that $\Phi$ is almost surely continuous at~$(\bar Y^k,\pi^k)$ concludes the proof.

The organisation of the paper is the following: 
in Section \ref{secexamples} we give some examples of applications of Theorem~\ref{convergencepointprocess}, then in Section~\ref{sectionpoisson} we recall 
some classical properties of Poisson measures and of the vague convergence. Finally, Section~\ref{sectionconvergence} is devoted to the proof of Theorem~\ref{convergencepointprocess}

 \section {Examples}
 \label{secexamples}
 
  We start this section with an example of system of point processes based on \cite{erny_mean_2019}. In this example  the stochastic intensities are in fact semimartingales. We illustrate both methods of proof of the convergence of this system: using Theorem~IX.4.15 of \cite{jacod_limit_2003} about semimartingale's convergence and using our Theorem~\ref{convergencepointprocess}.
\begin{ex}
Let $\alpha>0$ and $\pi^k$ ($k\geq 1$) be independent Poisson measures on $\r_+\times\r_+\times\r$ with intensity $dt\cdot dz\cdot d\nu(u)$ where $\nu$ is the centered normal distribution with variance one. Define $X^N$ as solution of
$$dX^N_t=-\alpha X^N_tdt+\frac{1}{\sqrt{N}}\sum_{j=1}^N\int_{\r_+\times\r}u\uno{z\leq 1+(X^N_{t-})^2}d\pi^k(t,z,u),$$
and
$$Z^{N,i}_t:=\int_{[0,t]\times\r_+}\uno{z\leq 1+(X^N_{s-})^2}d\pi^i(s,z,u).$$

It was shown in Theorem~1.4  of \cite{erny_mean_2019} that $X^N$ converges in distribution in $D(\r_+,\r)$  to $\bar X$, where $\bar X$ is solution of

$$d\bar X_t=-\alpha\bar X_tdt+\sqrt{1+\bar X_t^2}dW_t,$$
with $W$ some one-dimensional standard Brownian motion.
Define also
$$\bar Z^i_t=\int_{[0,t]\times\r_+}\uno{z\leq 1+\bar X_{s-}^2}d\bar\pi^i(s,z,u),$$
where $\bar\pi^i$ ($i\geq 1$) are independent Poisson measures on $\r_+^2$ having Lebesgue intensity.

In order to show the convergence of $(Z^{N,i})_{1\leq i\leq n}$  in distribution  in $D(\r_+,\r^n)$ using Theorem~IX.4.15 of \cite{jacod_limit_2003},
we have to consider the semimartingale $(X^N,Z^{N,1},...,Z^{N,n})$ and show that its caracteristics converge to those of $(\bar X,\bar Z^{1},...,\bar Z^{n}).$

An alternative proof of the convergence of $(Z^{N,i})_{1\leq i\leq n}$ to $(\bar Z^i)_{1\leq i\leq n}$ relies on the Theorem~\ref{convergencepointprocess}. 
Indeed, following Theorem~1.4 of \cite{erny_mean_2019},
 $X^N$ converges in distribution in $D(\r_+,\r)$ to $\bar X,$ and, being adapted to the same filtration,  the Brownian motion~$W$ is independent of the Poisson measures $\bar\pi^i$ ($i\geq 1$) (see Theorem~II.6.3 of \cite{ikeda_stochastic_1989}).
\end{ex}

Theorem~\ref{convergencepointprocess} allows us to consider processes that are not semimartingales, such as Hawkes processes and Volterra processes. Since the stochastic intensities of Hawkes processes are not, in general, semimartingales, Theorem~\ref{convergencepointprocess} can be interesting to show the convergence of Hawkes processes, provided that one can show the convergence of their stochastic intensities. Let us give an example of application of Theorem~\ref{convergencepointprocess} in this case. The example is based on Examples~7.3 and~7.4 of \cite{abi_jaber_weak_2019}.
\begin{ex}
Let us consider $K(t):=t^\gamma$ for some $\gamma>0$, $K^N(t):=K(t/N)$ and some Poisson random measure $\pi$ on $\r_+^2$ having Lebesgue intensity. Let $X^N$ satisfies
$$X_t^N=\int_{[0,t]\times\r_+}K^N(t-s)\uno{z\leq\ll|X^N_{s-}\rr|}d\pi(s,z)-\int_0^tK^N(t-s)\ll|X^N_s\rr|ds.$$
Theorem~7.2 of \cite{abi_jaber_weak_2019} implies that the sequence of processes $(\tilde X^N_t)_{t\geq 0}=(N^{-1}X^N_{Nt})_{t\geq 0}$ has converging subsequences (in distribution in the topology $L^2_{loc}$), and that every limit process $(\bar X_t)_{t\geq 0}$ satisfies
\begin{equation}
\label{volterra}
\bar X_t=\int_0^tK(t-s)\sqrt{|\bar X_s|}dB_s,
\end{equation}
for some standard Brownian motion~$B$. Besides, one can prove with standard arguments, the tightness of $(\tilde X^N)_N$ in Skorohod topology. Then, we can consider a subsequence of $(\tilde X^N)_N$ that converges in distribution in the topology of $L^2_{loc}$ and in Skorohod topology. The limit for both topologies is necessarily the same on Skorohod space. Indeed, let $\hat x$ be the limit for $L^2_{loc}$ topology and $\check x$ be the limit for Skorohod topology of a sequence of c\`adl\`ag functions $(x^n)_n$. The $L^2_{loc}$ convergence implies that $x^n_t$ converges to $\hat x_t$ for Lebesgue-a.e. $t\geq 0,$ whence, the convergence for all continuity point~$t$ of $\hat x$. Besides, the convergence in Skorohod topology implies the convergence $x^n_t$ to $\check x_t$ for every continuity point~$t$ of $\check x.$ This implies that $\hat x$ and $\check x$ have the same continuity points, and that $\check x=\hat x.$

This implies the convergence of (a subsequence of) $Y^N:=|\tilde X^N|$ to $\bar Y:=|\bar X|$ in Skorohod topology. Moreover, the Brownian motion can be shown to be necessarily independent of the Poisson measure~$\pi$ (using Theorem~II.6.3 of \cite{ikeda_stochastic_1989}). Then, Theorem~\ref{convergencepointprocess} implies the convergence in distribution in Skorohod topology of $Z^N_t:=\int_{[0,t]\times\r_+}\uno{z\leq Y^N_{s-}}d\pi(s,z)$ to the point process $\bar Z_t:=\int_{[0,t]\times\r_+}\uno{z\leq \bar Y_{s-}}d\bar\pi(s,z),$ where $\bar\pi$ is independent of $\bar Y$. To the best of our knowledge, there is no classical way to prove this convergence.
\end{ex}
As it was mentioned in Introduction, 
if the stochastic intensities of the limit point processes are deterministic, then the hypothesis of Theorem~\ref{convergencepointprocess} are satisfied. Let us give a practical example based on \cite{delattre_hawkes_2016}.
\begin{ex}
Consider any locally integrable $K:\r_+\rightarrow\r$ and any Lipschitz continuous function~$f:\r\rightarrow\r_+$. Define $X^N$ as solution of
$$X^N_t=\frac1N\sum_{j=1}^N\int_{[0,t]\times\r_+}K(t-s)\uno{z\leq f(X^{N}_{s-})}d\pi^j(s,z),$$
where $\pi^j$ ($j\geq 1$) are independent Poisson measures on $\r_+^2$ with Lebesgue intensity, and $\bar X$ as the deterministic solution of
$$\bar X_t=\int_0^tK(t-s)f(\bar X_s)ds.$$
Let
$$Z^{N,i}_t=\int_{[0,t]\times\r_+}\uno{z\leq f(X^N_{s-})}d\pi^i(s,z)\textrm{ and }\bar Z^i_t=\int_{[0,t]\times\r_+}\uno{z\leq f(\bar X_{s-})}d\pi^i(s,z).$$

Then, Theorem~8 of \cite{delattre_hawkes_2016} states that, for $i\geq 1,$ for all $T\geq 0,$
$$\esp{\underset{0\leq t\leq T}{\sup}\ll|Z^{N,i}_t-\bar Z^i_t\rr|}\leq C_T N^{-1/2},$$
for some constant $C_T.$ In other words, $Z^{N,i}$ converges to $\bar Z$ in a $L^1-$sense. As $\bar X$ is a deterministic function, Theorem~1 of \cite{brown_martingale_1978}, and a fortiori Theorem~\ref{convergencepointprocess}, can be used to show the (weaker) convergence in distribution in $D(\r_+,\r)$ of $Z^{N,i}$ to $\bar Z,$ provided $X^N$ converges to $\bar X.$
\end{ex}
%
%

\section{Classical properties of Poisson measures and the vague convergence}\label{sectionpoisson}

Let us begin with the usual definition of random measures and Poisson measures. We restrict this definition to the space $\r_+^2$ since we only need this space in the paper, but Definition~\ref{poisson} can be generalized to any measurable space. In the rest of the paper, $\r_+^2$ is always endowed with the Borel sigma algebra~$\mathcal{B}(\r_+^2)$.
\begin{defi}
\label{poisson}
A locally finite random measure on $\r_+^2$ is a $\mathcal{M}-$valued random variable, where $\mathcal{M}$ is endowed with the $\sigma-$algebra generated by the functions $\pi\in\mathcal{N}\mapsto\pi(B)$ ($B\in\mathcal{B}(\r_+^2)$).

A Poisson measure on $\r_+^2$ is a locally finite random measure $\pi$ satisfying:
\begin{itemize}
\item for all $B\in\mathcal{B}(\r_+^2),$ $\pi(B)$ follows a Poisson distribution,
\item for all $n\in\n^*,$ for all disjoint sets $B_1,\hdots,B_n\in\mathcal{B}(\r_+^2),$ the variables $\pi(B_i)$ ($1\leq i\leq n$) are independent.
\end{itemize}

The function $\mu : B\in\mathcal{B}(\r_+^2)\mapsto\esp{\pi(B)}$ is a measure on $\r_+^2$ that is called the intensity of $\pi.$
\end{defi}

\begin{rem}
In Definition~\ref{poisson}, we can consider Poisson distribution with parameter infinity. A Poisson variable with parameter infinity is a random variable~$X$ satisfying $X=+\infty$~a.s.
\end{rem}

Let us begin with an elementary lemma.

\begin{lem}
\label{mesurenulle}
Let $D\in\mathcal{B}(\r_+^2)$, $\mu$ be a (deterministic) measure on $\r_+^2,$ and $\pi$ be a Poisson measure on $\r_+^2$ with intensity $\mu.$ If $\mu(D)=0,$ then, a.s. $\pi(D)=0.$ 
\end{lem}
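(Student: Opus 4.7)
The plan is to apply the definition of a Poisson measure directly. By Definition~\ref{poisson}, the random variable $\pi(D)$ follows a Poisson distribution, and its parameter is $\E[\pi(D)] = \mu(D)$ by definition of the intensity measure. Since $\mu(D) = 0$ by hypothesis, $\pi(D)$ is a Poisson variable with parameter $0$, which is the Dirac mass at $0$; hence $\pi(D) = 0$ almost surely.

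The only subtlety to address is that $\mu(D)$ could in principle be replaced by $+\infty$ in other contexts, but here the hypothesis $\mu(D) = 0$ removes any such issue, so the argument is a one-line consequence of the definition. I would write the proof in essentially two sentences, referring back to Definition~\ref{poisson} for both the Poisson distribution of $\pi(D)$ and the identification of its parameter as $\mu(D)$.

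There is no real obstacle here: the lemma is a direct unpacking of the definition, recorded separately only because it will presumably be invoked repeatedly in Section~\ref{sectionconvergence} (for instance, to argue that $\pi$ puts no mass on the graph of a c\`adl\`ag function $\{(s,Y_{s-}) : s \geq 0\}$ when this set has zero Lebesgue measure).
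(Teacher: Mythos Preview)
Your proof is correct and follows exactly the same approach as the paper: invoke Definition~\ref{poisson} to conclude that $\pi(D)$ is Poisson with parameter $\mu(D)=0$, hence a.s.\ equal to~$0$. The paper's proof is literally a single sentence to this effect; your additional remarks about the $+\infty$ case and the downstream use in Section~\ref{sectionconvergence} are accurate but not needed for the argument itself.
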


\begin{proof}
By definition, $\pi(D)$ is a Poisson variable with parameter $\mu(D)=0.$
\end{proof}

Let us recall an important representation result of Poisson measures as marked renewal processes.

\begin{thm}
\label{representationpoissonmeasure}
Let $\lambda>0$, $\pi$ be a Poisson measure on $\r_+^2$ with Lebesgue intensity. Let $(T_i,Z_i)$ ($i\geq 1$) be the atoms of $\pi_{|\r_+\times[0,\lambda]}$, lexicographically ordered, and $T_0:=0$. Then
\begin{itemize}
\item $T_i,Z_i$ ($i\geq 1$) are real-valued random variables,
\item the variables $T_{i+1}-T_i$ ($i\geq 0$) are i.i.d. following the exponential distribution with parameter~$\lambda,$
\item $Z_i$ ($i\geq 1$) are i.i.d. following uniform distribution on~$[0,\lambda]$,
\item the families $(T_i)_{i\geq 1}$ and $(Z_i)_{i\geq 1}$ are independent.
\end{itemize}
\end{thm}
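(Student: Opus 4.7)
The plan is to reduce the statement to two classical facts about Poisson measures: the projection of a Poisson random measure onto a sub-coordinate is again Poisson with the projected intensity, and, conditional on the total number of atoms in a bounded set $A$, the atoms of a Poisson measure are i.i.d.\ uniform on $A$ with respect to the normalized intensity.

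First, I would set $N_t := \pi([0,t] \times [0,\lambda])$. Because $\pi$ has Lebesgue intensity and, for disjoint time intervals $(s_i,t_i]$, the rectangles $(s_i,t_i]\times[0,\lambda]$ are disjoint in $\r_+^2$, Definition~\ref{poisson} gives that $N$ has independent increments and $N_t - N_s$ is Poisson with parameter $\lambda(t-s)$. Hence $N$ is a homogeneous Poisson process on $\r_+$ with rate $\lambda$, and its jump times are exactly the $T_i$. The standard theory of Poisson processes then yields both that each $T_i$ is a.s.\ finite (so the atom $(T_i,Z_i)$ exists a.s.\ and both coordinates are real) and that the inter-arrival times $T_{i+1}-T_i$ are i.i.d.\ exponential with parameter $\lambda$.

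For the marks, I would fix $n$ and $t$ and work on the event $\{N_t=n\}$. The classical conditional-distribution property of Poisson measures says that, given $\pi([0,t]\times[0,\lambda])=n$, the $n$ atoms of $\pi$ in $[0,t]\times[0,\lambda]$ are i.i.d.\ uniform on $[0,t]\times[0,\lambda]$. By Fubini, an i.i.d.\ uniform sample on this rectangle has first coordinates i.i.d.\ uniform on $[0,t]$, second coordinates i.i.d.\ uniform on $[0,\lambda]$, and the two families independent. Passing to the lexicographic reordering $(T_1,Z_1,\ldots,T_n,Z_n)$ is measurable in the first coordinates alone (there are no ties a.s.), so exchangeability of the i.i.d.\ pairs shows that, after sorting, $Z_1,\ldots,Z_n$ remain i.i.d.\ uniform on $[0,\lambda]$ and jointly independent of $(T_1,\ldots,T_n)$. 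Letting $t\uparrow\infty$ extends the statement to the whole infinite sequence.

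The main obstacle I expect is the permutation step above: one must be careful that sorting by the first coordinates does not destroy the joint independence of $(Z_i)$ from $(T_i)$. The clean way to handle this is to note that the unordered point configuration has a law invariant under permutations acting \emph{simultaneously} on both coordinates, and then to condition on the unordered time set $\{T_1,\ldots,T_n\}$: the conditional distribution of $(Z_1,\ldots,Z_n)$ is then a product of uniforms on $[0,\lambda]$ that depends neither on which time is assigned which lexicographic label nor on the values of the times themselves, so the independence and uniformity claims pass to the ordered sequence.
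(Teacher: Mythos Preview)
Your proposal is correct and follows the standard route for this classical result. Note, however, that the paper does not actually prove Theorem~\ref{representationpoissonmeasure}: it is simply stated as a recalled fact (``Let us recall an important representation result of Poisson measures as marked renewal processes'') and used without proof in the sequel, so there is no paper proof to compare against. Your argument---projecting onto the time axis to get a rate-$\lambda$ Poisson process for the $T_i$, then using the conditional i.i.d.\ uniform description of the atoms on a rectangle and the exchangeability/permutation argument to handle the marks $Z_i$---is exactly the standard derivation one would find in a textbook treatment, and it is sound (in particular, your remark that conditioning on the unordered time set makes the conditional law of the marks a product of uniforms independent of the labeling resolves the only delicate point).
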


This result allows us to prove another classical property of the Poisson measures.

\begin{lem}
\label{auplusun}
Let $\pi$ be a Poisson measure on $\r_+^2$ with Lebesgue intensity. Then,
$$\pro{\forall t\geq 0,\pi(\{t\}\times\r_+)\leq 1}=1.$$
\end{lem}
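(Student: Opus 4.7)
The plan is to reduce to finite strips $\r_+\times[0,\lambda]$, where Theorem~\ref{representationpoissonmeasure} gives an explicit representation, and then take a countable union to recover $\r_+\times\r_+$.

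First, I would fix $\lambda\in\n^*$ and apply Theorem~\ref{representationpoissonmeasure} to $\pi_{|\r_+\times[0,\lambda]}$. Its atoms $(T_i,Z_i)_{i\geq 1}$ (lexicographically ordered) satisfy: the inter-arrival times $T_{i+1}-T_i$ are i.i.d.\ exponentially distributed with parameter $\lambda$. Since the exponential law is absolutely continuous, $T_{i+1}-T_i>0$ almost surely, so by a countable union over the pairs $(i,j)\in(\n^*)^2$ with $i\neq j$, almost surely all the $T_i$'s are pairwise distinct. On this event, for every $t\geq 0$ there is at most one index $i$ with $T_i=t$, i.e.\ $\pi(\{t\}\times[0,\lambda])\leq 1$ for every $t\geq 0$.

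Next, intersecting the corresponding almost sure events over the countable family $\lambda\in\n^*$, I obtain one event of probability one on which $\pi(\{t\}\times[0,\lambda])\leq 1$ for every $t\geq 0$ and every $\lambda\in\n^*$ simultaneously. On this event, for each fixed $t$, the increasing sequence $(\pi(\{t\}\times[0,\lambda]))_{\lambda\in\n^*}$ is bounded by~$1$, and by monotone convergence $\pi(\{t\}\times\r_+)=\lim_{\lambda\to\infty}\pi(\{t\}\times[0,\lambda])\leq 1$. This gives the claim.

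There is no serious obstacle here; the only point worth care is keeping track of the order of quantifiers (the exceptional null set must not depend on $t$), which is why the argument passes through ``all $T_i$'s are distinct'' rather than ``for each $t$, at most one $T_i$ equals $t$'', and why the union over $\lambda$ is restricted to integers so as to remain countable.
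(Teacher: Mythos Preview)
Your proof is correct and follows essentially the same approach as the paper: reduce to the strips $\r_+\times[0,\lambda]$ for $\lambda\in\n^*$, invoke Theorem~\ref{representationpoissonmeasure} to see that the atom times are a.s.\ distinct (via the exponential inter-arrival times), and then pass to the limit over the countable family of strips. The only cosmetic difference is that the paper works with the complementary event and takes the limit of probabilities, whereas you intersect the almost-sure events directly.
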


\begin{proof}
\begin{align*}
\pro{\exists t,\pi(\{t\}\times\r_+)\geq 2}=&\pro{\bigcup_{n\in\n^*}\ll\{\exists t,\pi_{|\r_+\times[0,n]}(\{t\}\times\r_+)\geq 2\rr\}}\\
=&\underset{n\rightarrow\infty}{\lim}\pro{\exists t,\pi_{|\r_+\times[0,n]}(\{t\}\times\r_+)\geq 2}
\end{align*}

Let $n\in\n^*.$ It is then sufficient to show that 
$$\pro{\exists t,\pi_{|\r_+\times[0,n]}(\{t\}\times\r_+)\geq 2}=0.$$

To this end, we use Theorem~\ref{representationpoissonmeasure} to write the atoms of $\pi_{|\r_+\times[0,n]}$ as $(T_i,Z_i)_{i\geq 1}.$ Then, we have
\begin{align*}
\pro{\exists t,\pi_{|\r_+\times[0,n]}(\{t\}\times\r_+)\geq 2}=&\pro{\exists i\geq 1,T_{i}=T_{i+1}}\\
\leq &\sum_{i\geq 1}\pro{T_i=T_{i+1}}=0
\end{align*}
since $T_{i+1}-T_i$ follows the exponential distribution with parameter $n$ ($i\geq 1$).
\end{proof}

We end this section with a result claiming that the vague convergence of locally finite point measures implies the convergence of their atoms.

\begin{prop}
\label{cvvague}
Let $P^k$ ($k\in\n$) and $P$ be locally finite simple point measures on $\r_+^2$ such that $P^k$ converges vaguely to $P.$ Let $T>0,M>0$ such that $P(\partial [0,T]\times[0,M])=0$. Denote $n_k:=P^k([0,T]\times[0,M]), n:=P([0,t]\times [0,M])$ and $(t^k_i,z^k_i)_{1\leq i\leq n_k}$ (resp. $(t_i,z_i)_{1\leq i\leq n}$) the atoms of $P^k_{|[0,T]\times[0,M]}$ (resp. $P_{|[0,T]\times[0,M]}$).

Then, for $k$ large enough, $n_k=n$, and there exists a sequence of permutation $(\sigma^k)_k$ of $\llbracket 1,n\rrbracket$ such that, for all $1\leq i\leq n,$ $t^k_{\sigma^k(i)}$ (resp. $z^k_{\sigma^k(i)}$) converges to $t_i$ (resp. $z_i$) as $k$ goes to infinity.  
\end{prop}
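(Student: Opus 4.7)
The proof hinges on localizing the vague convergence to small disjoint balls around each atom of $P$ and exploiting that point measures take integer values on relatively compact sets. First, a portmanteau-type consequence of vague convergence, applied to the continuity set $[0,T]\times[0,M]$ (whose boundary carries no $P$-mass by assumption), gives $P^k([0,T]\times[0,M]) \to n$, so integrality forces $n_k = n$ for all $k$ large enough. Note also that since $P(\partial([0,T]\times[0,M]))=0$ and $P$ is a simple point measure, every atom $(t_i, z_i)$ actually lies in the open rectangle $(0,T)\times(0,M)$.

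Next, I would fix $\eps > 0$ small enough that the closed balls $\overline{B_\eps(t_i, z_i)}$, $i=1,\ldots,n$, are pairwise disjoint and contained in $(0,T)\times(0,M)$. Since $P$ is locally finite, only countably many radii $r>0$ give positive $P$-mass to a sphere $\partial B_r(t_i, z_i)$, so I can further require $P(\partial B_\eps(t_i, z_i))=0$ for every $i$. Applying portmanteau on each ball separately gives $P^k(B_\eps(t_i, z_i)) \to P(B_\eps(t_i, z_i)) = 1$, and integrality once more forces equality to~$1$ for $k$ large. Because the balls are disjoint and $\sum_i P^k(B_\eps(t_i, z_i))=n=n_k$ eventually, every atom of $P^k$ inside the rectangle sits in exactly one of these balls for $k$ large enough.

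I would then define $\sigma^k(i)$ as the unique index $j$ with $(t^k_j, z^k_j) \in B_\eps(t_i, z_i)$; this produces a permutation of $\llbracket 1,n\rrbracket$. To boost ``within $\eps$'' to genuine convergence, I would repeat the construction for a sequence $\eps_m\downarrow 0$ satisfying the same null-boundary condition, obtaining thresholds $K_m$ such that for $k\geq K_m$ each $B_{\eps_m}(t_i, z_i)$ still contains a unique atom of $P^k$. Since $B_{\eps_m}(t_i, z_i) \subset B_\eps(t_i, z_i)$ and each enclosing ball contains only one atom of $P^k$, the two notions of permutation must coincide for $k \geq K_m$; hence $(t^k_{\sigma^k(i)}, z^k_{\sigma^k(i)})$ lies within distance $\eps_m$ of $(t_i, z_i)$ for all $k\geq K_m$, proving the claimed convergence.

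The main technical point is the nested-radius bookkeeping ensuring that a \emph{single} sequence of permutations $\sigma^k$ works simultaneously at every scale $\eps_m$; uniqueness of the atom of $P^k$ in each small ball makes the different scales automatically compatible.
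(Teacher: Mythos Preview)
Your proof is correct and follows the same strategy as the paper: apply the portmanteau consequence of vague convergence first to the rectangle (giving $n_k=n$ eventually) and then to small disjoint balls around the atoms of $P$ (forcing each ball to contain exactly one atom of $P^k$ and thereby producing the matching permutation). You are in fact more careful than the paper on two points---you explicitly choose radii so that the ball boundaries carry no $P$-mass before invoking $P^k(B_i)\to P(B_i)$, and you verify via the nested-radius argument that a \emph{single} permutation sequence $\sigma^k$ works simultaneously at every scale $\eps_m$---both of which the paper leaves implicit.
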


\begin{proof}
To begin with, Proposition~A2.6.II.(iv) of \cite{daley_introduction_2003} implies that $n_k$ converges to $n$ as $k$ goes to infinity. As $n_k$ ($k\in\n$) and $n$ are integers, this implies that $n_k=n$ for $k$ large enough.

To show the convergence of $t^k_i$ and $z^k_i,$ ($1\leq i\leq n$) let us fix some $\eps>0.$ Then, for each $1\leq i\leq n,$ consider an open ball $B_i$ centered on $(t_i,z_i)$ of radius smaller than $\eps$ (for the supremum norm) such that, for all $i\neq j,B_i\cap B_j=\varnothing.$

Using again Proposition~A2.6.II.(iv) of \cite{daley_introduction_2003}, we know that $P^k(B_i)$ converges to $P(B_i)=1$ ($1\leq i\leq n$). Since $P^k$ are point measures, this implies that for all $1\leq i\leq n,$ $P^k(B_i)=1$ for $k$ large enough. As the sets $B_i$ ($1\leq i\leq n$) are disjoint, there exists a permutation $\sigma^k$ such that $(t^k_{\sigma^k(i)},z^{k}_{\sigma^k(i)})\in B_i.$ Hence, for $k$ large enough, for all $1\leq i\leq n,$ $|t^k_{\sigma^k(i)}-t_i|\leq \eps$ and $|z^k_{\sigma^k(i)}-z_i|\leq \eps.$
\end{proof}

\section{Proof of Theorem~\ref{convergencepointprocess}}\label{sectionconvergence}

This section is dedicated to prove Theorem~\ref{convergencepointprocess}. Let us begin with an important result.

\begin{thm}
\label{continuitypointprocess}
Let $\Phi:D(\r_+,\r_+)^m\times\mathcal{N}^m\rightarrow D(\r_+,\r^m)$ be defined as
$$\Phi(x,\pi)_t:=\ll(\int_{[0,t]\times\r_+}\uno{z\leq x^j_{s-}}d\pi^j(s,z)\rr)_{1\leq j\leq m}.$$
Let $(x,\pi)\in D(\r_+,\r_+)^m\times\mathcal{N}^m$. A sufficient condition for $\Phi$ to be continuous at~$(x,\pi)$ is:
\begin{itemize}
\item[(a)] for each $1\leq j\leq m,$ for every $t\geq 0,$ $\pi^j(\{t\}\times\r_+)\leq 1,$
\item[(b)] for each $1\leq j\leq m,$ for every $t\geq 0,$ if $\pi^j(\{t\}\times\r_+)=1,$ then, for all $i\neq j,$ $\pi^i(\{t\}\times\r_+)=0,$
\item[(c)] for each $1\leq j\leq m,$ for every $t\geq 0$ such that $\pi^j(\{t\}\times\r_+)=1,~x^j$ is continuous at $t,$
\item[(d)] for each $1\leq j\leq m$, $\pi^j\ll(\{(t,x^j_{t-})~:~t\geq 0\}\rr)=0.$
\end{itemize}
\end{thm}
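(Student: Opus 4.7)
The plan is to prove sequential continuity at $(x,\pi)$: given $(x^k,\pi^k) \to (x,\pi)$ in $D(\r_+,\r_+)^m \times \mathcal{N}^m$, I want to exhibit Skorohod time changes $\lambda^k \to \mathrm{id}$ along which $\Phi(x^k,\pi^k) \circ \lambda^k$ converges locally uniformly to $\Phi(x,\pi)$. Since Skorohod convergence on $D(\r_+,\r^m)$ reduces to convergence on $D([0,T],\r^m)$ for $T$ ranging over a dense set, I fix $T>0$ such that $\pi^j(\{T\} \times \r_+)=0$ for every $j$; such $T$ are dense by (a) and the countability of atoms of each $\pi^j$. Since $x$ is c\`adl\`ag it is bounded on $[0,T]$, so I pick $M > \max_j \sup_{[0,T]} x^j$ avoiding the countably many atom-heights of the $\pi^j$'s, so that $\pi^j(\partial([0,T] \times [0,M]))=0$ for all $j$. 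Proposition~\ref{cvvague}, applied coordinate-wise, then provides, for $k$ large, a bijection between the atoms of $\pi^{k,j}|_{[0,T] \times [0,M]}$ and those of $\pi^j|_{[0,T] \times [0,M]}$, with coordinate-wise convergence.

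By (a) and (b), the collection of all atom times $t^j_i$ across $j$ in $[0,T]$ consists of pairwise distinct values, which I order as $0 < s_1 < \cdots < s_N < T$, each $s_l$ corresponding to a unique pair $(j_l,i_l)$. At $s_l$, $\Phi(x,\pi)^{j_l}$ jumps iff $z^{j_l}_{i_l} \leq x^{j_l}_{s_l-}$; by (d), equality is excluded, so the inequality is strict one way or the other. By (c), $x^{j_l}$ is continuous at $s_l$, so the standard fact that Skorohod convergence together with continuity of the limit at a point forces $x^{k,j_l}_{u_k-} \to x^{j_l}_{s_l}$ for any sequence $u_k \to s_l$ gives $x^{k,j_l}_{t^{k,j_l}_{i_l}-} \to x^{j_l}_{s_l}$. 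Combined with $z^{k,j_l}_{i_l} \to z^{j_l}_{i_l}$, this yields, for $k$ large, $z^{k,j_l}_{i_l} < x^{k,j_l}_{t^{k,j_l}_{i_l}-}$ iff $z^{j_l}_{i_l} < x^{j_l}_{s_l-}$. Consequently the jump structure of $\Phi(x^k,\pi^k)$ on $[0,T]$ is in exact correspondence with that of $\Phi(x,\pi)$.

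It remains to build a piecewise-linear time change $\lambda^k : [0,T] \to [0,T]$ fixing $0$ and $T$ and sending each $s_l$ to the perturbed atom time $\tau^k_l := t^{k,j_l}_{i_l}$; since $\tau^k_l \to s_l$, $\lambda^k \to \mathrm{id}$ uniformly on $[0,T]$. On each interval $(s_l,s_{l+1})$ both $\Phi(x,\pi)$ and $\Phi(x^k,\pi^k) \circ \lambda^k$ are constant, and by the jump matching of the previous paragraph these constants coincide for $k$ large. Hence $\Phi(x^k,\pi^k) \circ \lambda^k = \Phi(x,\pi)$ on $[0,T]$ eventually, which is stronger than the required Skorohod convergence on $[0,T]$.

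The main obstacle I anticipate is ensuring the jump alignment is actually stable under perturbation, and this rests entirely on the interplay of the four conditions. Condition (d) is what excludes the critical case $z^j_i = x^j_{s_l-}$ in which an infinitesimal perturbation could create or destroy a jump; (c) is what allows replacing $x^{k,j}_{t^{k,j}_i-}$ by the limit $x^j_{s_l-}$ even though $t^{k,j}_i \neq s_l$; and (a)--(b) isolate atom times both within and across coordinates, so that the time change is unambiguously defined and the constant-between-atoms structure of $\Phi$ is preserved.
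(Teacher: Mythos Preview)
Your proposal is correct and follows essentially the same route as the paper: restrict to $[0,T]$ with $T$ avoiding atom times, invoke Proposition~\ref{cvvague} to match atoms of $\pi^{k,j}$ with those of $\pi^j$, use (c) together with Lemma~\ref{convergenceponctuel} and (d) to stabilize the indicators $\uno{z\leq x_{s-}}$, and use (a)--(b) to guarantee that a single increasing time change works across all $m$ coordinates. The only structural difference is cosmetic: the paper packages the time-change construction into Lemma~\ref{critereconvergence} (first proving pointwise convergence of $Z^k_t$ on a dense set of $t$'s, then deducing Skorohod convergence), whereas you build $\lambda^k$ directly from the matched atom times; the content is identical.

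One small gap to patch: your $M$ is chosen only to dominate $\max_j\sup_{[0,T]}x^j$, but the claim that $\Phi(x^k,\pi^k)$ is constant between consecutive $\tau^k_l$'s requires $M>\sup_{[0,T]}x^{k,j}$ as well, otherwise an atom of $\pi^{k,j}$ at height in $(M,\sup x^{k,j}]$ could produce an unmatched jump. The paper handles this by taking $M>\max\bigl(\|x^j\|_{\infty,[0,T]},\sup_k\|x^{k,j}\|_{\infty,[0,T]}\bigr)$, the latter supremum being finite since Skorohod convergence on $[0,T]$ implies boundedness of the sequence of sup norms.
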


Before proving Theorem~\ref{continuitypointprocess}, let us point out that, in general, $\Phi$ is not continuous at every point of $D(\r_+,\r_+)^m\times\mathcal{N}^m$. This is shown in Example~\ref{pascontinue}, where hypothesis~$(d)$ is not satisfied.

\begin{ex}
\label{pascontinue}
Let us consider the point measure $\pi=\delta_{(1,1)}$ and the constant function $x:t\in\r_+\mapsto 1$. In addition, we consider the functions $x^n$ defined as in Figure~\ref{xN} below. Obviously, $||x-x^n||_\infty=1/n$, but $\Phi(x,\pi)_t=\uno{t\geq 1}$ and $\Phi(x^n,\pi)=0$. In other words, $x^n$ converges uniformly to $x$, but $\Phi(x^n,\pi)$ does not converge to $\Phi(x,\pi)$ for non-trivial topologies.
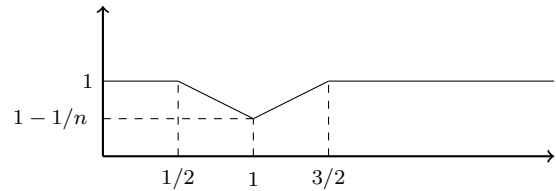
\begin{figure}[ht!]
\centering
\begin{tikzpicture}
\draw[->,thick](0,0)--(0,2);
\draw[->,thick](0,0)--(6,0);
\draw(0,1)--(1,1);
\draw(1,1)--(2,0.5);
\draw(2,0.5)--(3,1);
\draw(3,1)--(6,1);
\draw[dashed](2,0)--(2,0.5);
\draw[dashed](1,0)--(1,1);
\draw[dashed](3,0)--(3,1);
\draw[dashed](0,0.5)--(2,0.5);
\node at (1,-0.3) {$1/2$};
\node at (3,-0.3) {$3/2$};
\node at (2,-0.3) {$1$};
\node at (-0.7,0.5) {$1-1/n$};
\node at (-0.2,1) {$1$};
\end{tikzpicture}
\caption{Graph of $x^n$}
\label{xN}
\end{figure}
\end{ex}

The proof of Theorem~\ref{continuitypointprocess} uses the following Lemmas about the convergence in Skorohod space.

\begin{lem}
\label{convergenceponctuel}
Let $(x_N)_N$ be a sequence of $D(\r_+,\r)$ converging to some $x\in D(\r_+,\r)$, and $\left(t_N\right)_N$ be a sequence converging to some $t>0$. If $x$ is continuous at $t$, then $x_N(t_N-)\rightarrow x(t)$.
\end{lem}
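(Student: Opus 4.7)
The plan is to invoke the time-change representation of Skorohod convergence. Fix any $T>t$ at which $x$ is continuous. On $[0,T]$, the convergence $x_N\to x$ in Skorohod topology provides continuous strictly increasing surjections $\lambda_N:[0,T]\to[0,T]$ satisfying $\|\lambda_N-\mathrm{id}\|_\infty\to 0$ and $\|x_N\circ\lambda_N-x\|_\infty\to 0$.

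First I would set $s_N:=\lambda_N^{-1}(t_N)$, which is well defined for $N$ large since $t_N\to t<T$. From the uniform convergence $\lambda_N^{-1}\to\mathrm{id}$ on $[0,T]$ and $t_N\to t$, one obtains $s_N\to t$.

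Next, since $\lambda_N$ is continuous and strictly increasing, for $s<s_N$ we have $\lambda_N(s)<\lambda_N(s_N)=t_N$ and $\lambda_N(s)\uparrow t_N$ as $s\uparrow s_N$; passing to the limit in $x_N\circ\lambda_N$ yields the identity $(x_N\circ\lambda_N)(s_N-)=x_N(t_N-)$. I would then decompose
\[
|x_N(t_N-)-x(t)|\leq \bigl|(x_N\circ\lambda_N)(s_N-)-x(s_N-)\bigr|+|x(s_N-)-x(t)|.
\]
The first summand is bounded by $\|x_N\circ\lambda_N-x\|_{\infty,[0,T]}$, which tends to $0$ by Skorohod convergence. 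For the second, continuity of $x$ at $t$ yields, given $\eps>0$, some $\delta>0$ with $|x(r)-x(t)|<\eps$ for $|r-t|<\delta$; for $N$ large enough, $s_N\in(t-\delta/2,t+\delta/2)$, and taking $r\uparrow s_N$ keeps $r$ in $(t-\delta,t+\delta)$, so $|x(s_N-)-x(t)|\leq\eps$. Letting $\eps\to 0$ concludes.

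The only delicate point is the identity $(x_N\circ\lambda_N)(s_N-)=x_N(t_N-)$: it rests crucially on $\lambda_N$ being both continuous and strictly increasing, which forces $\lambda_N(s)$ to approach $t_N$ strictly from below as $s\uparrow s_N$, so that the left-limit commutes with the time-change. This is why the time-change version of Skorohod's characterization (rather than some weaker notion of convergence) must be used. The remainder is a routine $\eps/\delta$ argument, and the hypothesis $t>0$ enters only to ensure that left-limits at $t$ make sense.
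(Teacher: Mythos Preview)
Your proof is correct and follows essentially the same route as the paper's: both invoke the time-change characterization of Skorohod convergence on $[0,T]$ for a continuity point $T>t$ of $x$, then combine the uniform closeness coming from the time-change with continuity of $x$ at $t$. The only cosmetic difference is that the paper composes the time-change with $x$ (working with $\|x_N-x\circ\lambda_N\|_\infty$ and evaluating directly at $t_N$), whereas you compose it with $x_N$ (working with $\|x_N\circ\lambda_N-x\|_\infty$ and introducing $s_N=\lambda_N^{-1}(t_N)$); both lead to the same two-term decomposition and the same $\varepsilon/\delta$ conclusion.
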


\begin{proof}
Let $T>t$ such that $x$ is continuous at~$T$. By Theorem~16.2 of \cite{billingsley_convergence_1999}, $x_N$ converges to $x$ in $D([0,T],\r)$. Consequently, there exists a sequence of continuous increasing bijective functions $\lambda_N:[0,T]\to[0,T]$ such that $\ll|\ll|\lambda_N-Id\rr|\rr|_{\infty,[0,T]}$ and $\ll|\ll|x_N-x\circ\lambda_N\rr|\rr|_{\infty,[0,T]}$ vanish as $N$ goes to infinity. Then, as $\lambda_N$ is continuous,
\begin{align*}
\ll|x(t)-x_N(t_N-)\rr|\leq& \ll|x(t)-x(\lambda_N(t_N))\rr|+\ll|x(\lambda_N(t_N-))-x_N(t_N-)\rr|\\
\leq& \ll|x(t)-x(\lambda_N(t_N))\rr|+||x\circ\lambda_N-x_N||_{\infty,[0,T]}
\end{align*}
vanishes as $N$ goes to infinity since
$$\ll|\lambda_N(t_N)-t\rr|\leq \ll|\lambda_N(t_N)-t_N\rr|+\ll|t_N-t\rr|\leq \ll|\ll|\lambda_n-Id\rr|\rr|_{\infty,[0,T]}+\ll|t_N-t\rr|.$$
\end{proof}

\begin{lem}
\label{critereconvergence}
Let $T>0$, $k\in\n^*$, $n_i\in\n^*,$ and consider increasing sequences $0=t_{i,0}<t_{i,1}<\hdots<t_{i,n_i-1}<t_{i,n_i}=T$, $0=t_{i,0}^N<t_{i,1}^N<\hdots<t_{i,n_i^N-1}^N<t_{i,n_i^N}^N=T$ ($1\leq i\leq k$). We define the functions $g,g_N\in D([0,T],\r^k)$ by
\begin{equation*}
\left\{\begin{array}{l}
g(t)=\left(\sum_{j=0}^{n_i-1}\un_{[t_{i,j},t_{i,j+1}[}(t)j\right)_{1\leq i\leq k}\textrm{ for }t\in[0,T[,\\
g(T)=\left(n_i-1\right)_{1\leq i\leq k},
\end{array}\right.
\end{equation*}
and
\begin{equation*}
\left\{\begin{array}{ll}
g_N(t)=\left(\sum_{j=0}^{n_i^N-1}\un_{[t_{i,j}^N,t_{i,j+1}^N[}(t)j\right)_{1\leq i\leq k}\textrm{ for }t\in[0,T[,\\
g_N(T)=\left(n_i^N-1\right)_{1\leq i\leq k}.
\end{array}\right.
\end{equation*}

We assume that there exists a dense subset $A\subseteq[0,T]$ containing $T$ and such that, for all $t\in A,g_N(t)$ converges to $g(t)$. Moreover, we assume that for all $i_1\neq i_2,$ for all $j_1\in\llbracket 1,n_{i_1-1}\rrbracket$ and $j_2\in\llbracket 1,n_{i_2-1}\rrbracket$, $t_{i_1,j_1}\neq t_{i_2,j_2}$. Then $g_N$ converges to $g$ in $D\left([0,T],\r^k\right)$.
\end{lem}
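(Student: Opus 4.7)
My plan is to exhibit continuous strictly increasing bijections $\lambda_N:[0,T]\to[0,T]$ such that $\ll|\ll|\lambda_N-\mathrm{Id}\rr|\rr|_{\infty,[0,T]}\to 0$ and $g\circ\lambda_N=g_N$ on $[0,T]$ for $N$ large enough; this directly yields convergence in $D([0,T],\r^k)$ for the Skorohod topology. The natural candidate for $\lambda_N$ is the piecewise linear interpolation sending the jump times of $g_N$ onto those of $g$, so the crucial preliminary step is to show that the jump times of $g_N$ converge to those of $g$.

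First, I would apply the hypothesis at the point $T\in A$: from $g_N(T)\to g(T)$ we get $n_i^N-1\to n_i-1$ for every $1\leq i\leq k$, and since these are integers, $n_i^N=n_i$ for $N$ large. Then I would fix $1\leq i\leq k$ and $j\in\llbracket 1,n_i-1\rrbracket$ and choose $\eps>0$ small enough so that $(t_{i,j}-\eps,t_{i,j}+\eps)$ contains no jump time of~$g$ other than $t_{i,j}$ (possible thanks to the distinctness assumption). Using the density of $A$, I pick $s,s'\in A$ with $t_{i,j}-\eps<s<t_{i,j}<s'<t_{i,j}+\eps$, so that $g^i(s)=j-1$ and $g^i(s')=j$; by pointwise convergence, $g_N^i(s)=j-1$ and $g_N^i(s')=j$ for $N$ large. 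Since $g_N^i$ is a step function whose jumps are unit increments, the unique jump of $g_N^i$ in $(s,s']$ must be its $j$-th one, namely $t_{i,j}^N$, whence $t_{i,j}^N\in(s,s']\subset(t_{i,j}-\eps,t_{i,j}+\eps)$. As $\eps$ is arbitrary, $t_{i,j}^N\to t_{i,j}$.

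Using the disjointness of the $t_{i,j}$'s across coordinates, I would list the entire set of jump times in increasing order as $\tau_1<\ldots<\tau_m$ with $m:=\sum_{i=1}^k(n_i-1)$; each $\tau_l$ corresponds to a unique pair $(i_l,j_l)$, and I set $\tau_l^N:=t_{i_l,j_l}^N$. The previous step gives $\tau_l^N\to\tau_l$, so for $N$ large the sequence $(\tau_l^N)_l$ is also strictly increasing and contained in $(0,T)$. I then define $\lambda_N$ as the continuous increasing piecewise linear bijection of $[0,T]$ with $\lambda_N(0)=0$, $\lambda_N(\tau_l^N)=\tau_l$ and $\lambda_N(T)=T$. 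Since $\lambda_N-\mathrm{Id}$ is piecewise linear, its supremum norm is attained at a node and equals $\max_l|\tau_l^N-\tau_l|\to 0$. Moreover $\lambda_N$ maps each $[\tau_l^N,\tau_{l+1}^N)$ bijectively onto $[\tau_l,\tau_{l+1})$ (with the convention $\tau_0=\tau_0^N=0$, $\tau_{m+1}=\tau_{m+1}^N=T$), and on such an interval both $g$ and $g_N$ are constant with matching values, since the same number of jumps has occurred in each coordinate. Hence $g_N=g\circ\lambda_N$ on $[0,T]$ for $N$ large, as desired.

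The hard part is the second step: the hypothesis provides only pointwise convergence on a dense subset, and from this one must pin down the positions of a sparse, discrete collection of jump times. The key leverage is the integer-valuedness of $g_N^i$ combined with the fact that its jumps are unit increments, which rigidifies the location of the $j$-th jump as soon as the function is known slightly before and slightly after $t_{i,j}$.
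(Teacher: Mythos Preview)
Your proof is correct and follows essentially the same approach as the paper: deduce $n_i^N=n_i$ from convergence at $T\in A$, prove $t_{i,j}^N\to t_{i,j}$ using the density of $A$ and the integer-valuedness of $g_N^i$, then build the piecewise linear time change $\lambda_N$ on the merged ordered list of jump times so that $g_N=g\circ\lambda_N$. The only differences are cosmetic: the paper argues the convergence of $t_{i,j}^N$ via a limit-point/subsequence argument rather than your direct $\eps$-interval, and you make explicit (as one should) that the $(\tau_l^N)_l$ are strictly increasing for large $N$, which the paper leaves implicit.
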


\begin{proof}
Since $g_N(T)=(n^N_i-1)_{1\leq i\leq k}$ converges to $g(T)=(n_i-1)_{1\leq i\leq k}$, we know that $n^N_i=n_i$ for all $N$ (large enough) and all $1\leq i\leq k$.

Now, we show that for each $1\leq i\leq k,1\leq j\leq n_i-1,$ $t^N_{i,j}$ converges to $t_{i,j}.$ As the sequence $(t^N_{i,j})_N$ is bounded, it is sufficient to show that $t_{i,j}$ is its only limit point. Let $s$ be a limit of a subsequence $(t^{\phi(N)}_{i,j})_N$.

We show that $s=t_{i,j}.$ If $s>t_{i,j}$ there would exist some $r\in A\cap]t_{i,j},s[$ satisfying that $g_{\phi(N)}(r)$ converges to $g(r).$ This is not possible because, as $r<s=\lim_N t_{i,j}^{\phi(N)},$ $g_{\phi(N)}(r)_i\leq j-1$ for $N$ large enough, and as $r>t_{i,j},$ $g(r)_i\geq j.$ For the same reason, it is not possible to have $s<t_{i,j}.$ As a consequence, $t_{i,j}$ is the only limit point of the bounded sequence $\ll(t_{i,j}^N\rr)_N$. This implies the convergence of $t^N_{i,j}$ to $t_{i,j}.$ In the rest of the proof, let us re-index the set $\{t_{i,j}:1\leq i\leq k,1\leq j\leq n_i-1\}$ as $\{s_i:1\leq i\leq n\}$ where $n=\sum_{i=1}^k(n_i-1),$ such that $s_1<s_2<...<s_n.$ And we consider the same indexes for the points $t^N_{i,j}$ ($1\leq i\leq k,1\leq j\leq n_i-1$).

To prove the convergence of $g_N$ to $g$ in $D([0,T],\r)$, we just have to define the sequence of functions $(\lambda_N)_N$ such that each $\lambda_N$ is the function that is linear on each interval $[s_i^N,s^N_{i+1}]$ and that satisfies $\lambda_N(s^N_i)=s_i.$ These functions verify $g_N=g\circ\lambda_N$ and
$$||\lambda_N-Id||_{\infty,[0,T]}=\underset{1\leq i\leq n}{\max}|s_i-s_i^N|=\underset{\substack{1\leq i\leq k\\1\leq j\leq n_i-1}}{\max}|t_{i,j}-t^N_{i,j}|\underset{n\rightarrow\infty}{\longrightarrow}0.$$
\end{proof}

Now, we can give the

\begin{proof}[Proof of Theorem~\ref{continuitypointprocess}]
Let $(x^k,\pi^k)=(x^{1,k},\hdots,x^{m,k},\pi^{1,k},\hdots,\pi^{m,k})_k$ converges in $D(\r_+,\r)^m\times\mathcal{N}^m$ to $(x,\pi)=(x^1,\hdots,x^m,\pi^1,\hdots,\pi^m)$. Let $Z:=\Phi(x,\pi)$ and $Z^k:=\Phi(x^k,\pi^k).$

Let us consider $T\geq 0$ such that for all $1\leq j\leq m,$ $\pi^j(\{T\}\times\r_+)=0$ and for all $k\in\n^*,\pi^{j,k}(\{T\}\times\r_+)=0$. In particular $T$ is a point of continuity of $Z$ and of each $Z^k$, and, as the set of all the atoms of the measures $\pi^j$ and $\pi^{j,k}$ ($1\leq j\leq m, k\geq 1$) is countable, the set of points~$T$ satsifying the previous conditions is dense.

According to the proof of Theorem~16.2 of \cite{billingsley_convergence_1999}, in order to prove the convergence of $Z^{k}$ to $Z$ in $D(\r_+,\r^m)$, it is sufficient to prove this convergence in $D([0,T],\r^m)$ for the points $T$ satisfying the conditions of the previous paragraph. Indeed, these points $T$ are continuity points of $Z$ and there exists an increasing sequence of such points $T$ going to infinity. Then, by Lemma~\ref{critereconvergence} (whose hypothesis is satisfied thanks to hypothesis~$(a)$ and~$(b)$), the convergence of $Z^k$ to $Z$ in $D([0,T],\r^m)$ will follow from the convergence of $Z^k_t$ to $Z_t$ for every point~$t$ satsifying the same conditions as~$T$. Let us show the convergence of $Z^{j,k}_t$ to $Z^j_t$ for every $1\leq j\leq m.$ In the rest of the proof, we work with fixed~$j,t,T.$

To show this, fix some $M>\max(\left|\left|x^j\right|\right|_{\infty,[0,T]},{\sup}_k~\left|\left|x^{j,k}\right|\right|_{\infty,[0,T]})$ (where we know that the supremum over $k$ of $\left|\left|x^{j,k}\right|\right|_{\infty,[0,T]}$ is finite since $(x^{j,k})_k$ converges in Skorohod topology) such that $\pi^j(\r_+\times[0,M])=0$, and write $\left\{\left(\tau_{i},\zeta_{i}\right)~:~1\leq i\leq N\right\}$ the set of the atoms of $\pi^j_{|[0,t]\times[0,M]}$ and $\left\{\left(\tau_{i}^k,\zeta_{i}^k\right)~:~1\leq i\leq N_k\right\}$ that of $\pi^{j,k}_{|[0,t]\times[0,M]}$.

Then, Proposition~\ref{cvvague} implies that $N_k=N$ for all $k$ (large enough), and that, for each $1\leq i\leq N,$ $\tau_i^k$ and $\zeta_i^k$ converge respectively to $\tau_i$ and $\zeta_i$ (we can assume that $\sigma^k=Id$ in the statement of Proposition~\ref{cvvague}, possibly reordering the indexes of the atoms of every $P^k$).

Notice that $$Z^{j,k}_{t}=\sum_{i=1}^{N}\un_{\left\{\zeta_{i}^k\leq x^{j,k}_{\tau_{i}^k-}\right\}}\un_{\left\{\tau_{i}^k\leq t\right\}}.$$

To end the proof, one has to note that $\un_{\{\zeta_{i}^k\leq x^{j,k}_{\tau_{i}^k-}\}}$ converges to $\un_{\{\zeta_{i}\leq x^j_{\tau_{i}-}\}},$ and that $\un_{\{\tau_{i}^{k}\leq t\}}$ converges to $\un_{\{\tau_{i}\leq t\}}=1.$ By hypothesis~$(d)$, $\zeta_i\neq x^j_{\tau_i-}$, whence there are two cases, either $\zeta_i<x^j_{\tau_i-}$ or $\zeta_i>x^j_{\tau_i-}.$ In the first case, we consider $\eps>0$ such that $\zeta_i+\eps<x^j_{\tau_i-}.$ Then, noticing that hypothesis~$(c)$ guarantees that $x^j$ is continuous at~$\tau_i$, Lemma~\ref{convergenceponctuel} and the convergence of $\tau_i^k$ and $\zeta_i^k$ respectively to $\tau_i$ and $\zeta_i$ imply that, for $k$ large enough, $\zeta^k_i<\zeta_i+\eps/3<x^j_{\tau_i}-\eps/3<x^{j,k}_{\tau^k_i-},$ what implies the convergence of $\un_{\{\zeta_{i}^k\leq x^{j,k}_{\tau_{i}^k-}\}}$ to $\un_{\{\zeta_{i}\leq x^j_{\tau_{i}-}\}}$. The second case is handled in the same way, as well as the convergence of $\un_{\{\tau_{i}^{k}\leq t\}}$ to $\un_{\{\tau_{i}\leq t\}},$ recalling that $\pi^j(\{t\}\times\r_+)=0,$ and so $\tau_i<t$.
\end{proof}

Let us finally prove our main result.

\begin{proof}[Proof of Theorem~\ref{convergencepointprocess}]
{\it Step 1:} Let us show that $(Z^{N,k})_{1\leq k\leq n}$ converges to $(\bar Z^k)_{1\leq k\leq n}$ as $N$ goes to infinity in $D(\r_+,\r^n).$

Since $(D(\r_+,\r)\times \mathcal{N})^{n}$ is a separable metric space (see Theorem~16.3 of \cite{billingsley_convergence_1999}, and Theorem~A2.6.III.(i) of \cite{daley_introduction_2003}), we can apply Skorohod representation theorem (see e.g. Theorem~6.7 of \cite{billingsley_convergence_1999}) to show the almost sure convergence of a sequence $((\tilde Y^{N,1},\tilde \pi^{N,1}),\hdots,(\tilde Y^{N,n},\tilde \pi^{N,n}))$ to $((\tilde Y^1,\tilde \pi^1),\hdots,(\tilde Y^n,\tilde \pi^n))$ in $(D(\r_+,\r)\times\mathcal{N})^{n}$ as $N$ goes to infinity, where these variables have respectively the same distribution as $((Y^{N,1},\pi^1),\hdots,(Y^{N,n},\pi^n))$ and $((\bar{Y}^1,\bar{\pi}^1),\hdots,(\bar{Y}^n,\bar{\pi}^n))$.

Then Theorem~\ref{continuitypointprocess} implies the almost sure convergence of the multivariate point processes $(\widetilde{Z}^{N,k})_{1\leq k\leq n}:=\Phi((\widetilde{Y}^{N,k},\widetilde{\pi}^{N,k})_{1\leq k\leq n})$ to $(\widetilde{Z}^{k})_{1\leq k\leq n}:=\Phi((\widetilde{Y}^k,\widetilde{\pi}^k)_{1\leq k\leq n})$ in $D(\r_+,\r^n)$. Let us show that the hypothesis of Theorem~\ref{continuitypointprocess} are satisfied almost surely. Hypothesis~$(a)$ is a classical property of Poisson measures (see Lemma~\ref{auplusun}). Hypothesis~$(b)$ is satisfied since the Poisson measures $\bar \pi^i$ ($i\geq 1$) are independent, whence, considering $i\neq j,$ denoting $A(\bar \pi^j)$ the set of points $t\geq 0$ such that $\bar\pi^j(\{t\}\times\r_+)=0,$ we have that $\bigcup_{t\in A(\bar\pi^j)}\{t\}\times\r_+$ is a null set (since $A(\bar\pi^j)$ is finite or countable) independent of $\bar \pi^i,$ and consequently
$$\pro{\bar \pi^i(\bigcup_{t\in A(\bar\pi^j)}\{t\}\times\r_+)\neq 0}=\esp{\pro{\bar\pi^i(\bigcup_{t\in A(\bar\pi^j)}\{t\}\times\r_+)\neq 0|\bar\pi^j}}=0,$$
by Lemma~\ref{mesurenulle}.

Hypothesis~$(c)$ and~$(d)$ are satisfied for a similar reason. For $(c)$, one has to observe that
$$\pro{\exists t>0,\pi^j(\{t\}\times\r_+)=1\textrm{ and }\bar Y^j\textrm{ is not continuous at }t}=\pro{\bar \pi^j(\bigcup_{t\in D(\bar Y^j)}\{t\}\times\r_+)\geq 1},$$
where $D(\bar Y^j)$ is the set of discontinuity points of $\bar Y^j$. As $D(\bar Y^j)$ is a.s. finite or countable (see e.g. the discussion after Lemma~1 of Section~12 of \cite{billingsley_convergence_1999}), whence $\bigcup_{t\in D(\bar Y^j)}\{t\}\times\r_+$ is a null set independent of $\bar\pi^j$, and Lemma~\ref{mesurenulle} gives the result:
$$\pro{\bar \pi^j(\bigcup_{t\in D(\bar Y^j)}\{t\}\times\r_+)\geq 1}=\esp{\pro{\bar \pi^j(\bigcup_{t\in D(\bar Y^j)}\{t\}\times\r_+)\geq 1|\bar Y^j}}=0.$$
And hypothesis~$(d)$ holds true, because the set $\{(t,\bar Y^j_{t-}):t\geq 0\}$ is also a null set independent of~$\bar\pi^j$.

Then, the almost sure convergence of $(\widetilde{Z}^{N,k})_{1\leq k\leq n}$ to $(\widetilde{Z}^{k})_{1\leq k\leq n}$ implies the convergence in distribution of $({Z}^{N,k})_{1\leq k\leq n}$ to $({Z}^{k})_{1\leq k\leq n}$ in $D(\r_+,\r^n).$

{\it Step 2:} We have shown that, for every $n\in\n^*,$ $(Z^{N,k})_{1\leq k\leq n}$ converges to $(\bar Z^k)_{1\leq k\leq n}$ in distribution in $D(\r_+,\r^n).$ This implies the convergence in the weaker topology $D(\r_+,\r)^n.$ Then, the convergence of $(Z^{N,k})_{k\geq 1}$ to $(\bar Z^k)_{k\geq 1}$ as $N$ goes to infinity in $D(\r_+,\r)^{\n^*}$ is classical (see e.g. Theorem~3.29 of \cite{kallenberg_foundations_1997}).
\end{proof}

%

\bibliography{biblio}

\end{document}